\documentclass[12pt]{amsart}

\usepackage[T1]{fontenc}
\usepackage[utf8]{inputenc}
\usepackage{amsmath,amsthm,amssymb}
\usepackage{graphicx}
\usepackage{enumerate}
\usepackage{color}

\usepackage{dsfont}
\usepackage{hyperref}
\hypersetup{
    colorlinks=true,
    linkcolor=blue,
    citecolor=red,
    urlcolor=blue,
    pdfborder={0 0 0}
}
\usepackage{fullpage}

\newcommand{\F}{\mathcal F}
\newcommand{\E}{\mathds E}
\newcommand{\R}{\mathbb R}
\newcommand{\N}{\mathbb N}

\newcommand{\com }{\mathbb C}
\renewcommand{\P}{\mathds P}

\newcommand{\Ll}{\mathcal{L}}
\newcommand{\Var}{\mathrm{Var}}

\renewcommand{\d}{\delta}
\renewcommand{\a}{\alpha}
\newcommand{\eps}{\varepsilon}
\newcommand{\g }{\gamma}

\newcommand{\1}{{\mathds{1}}}
\newcommand{\8}{\infty}
\newtheorem{lemma}{Lemma}

\newtheorem{theorem}{Theorem}

\newtheorem{remark}{Remark}

\begin{document}
	\title[Absolute continuity of the martingale limit in BPRE]{Absolute continuity of the martingale limit in branching processes in random environment}
	\author[]{Ewa Damek, Nina Gantert, Konrad Kolesko}

  \address{Ewa Damek, Mathematical Institute, University of
Wroc{\l}aw, Plac Grunwaldzki 2/4, 50-384 Wroc{\l}aw, Poland }
	\email{edamek@math.uni.wroc.pl}

  \address{Nina Gantert, Mathematical Institute, Tech\-nische Universit\"at M\"unchen, 
  Boltzmannstr.~3, 85748 Garching, Germany}
	\email{gantert@ma.tum.de}

  \address{Konrad Kolesko, Mathematical Institute, University of
Wroc{\l}aw, Plac Grunwaldzki 2/4, 50-384 Wroc{\l}aw, Poland \newline 
Institut f\"ur Mathematik, Universit\"at Innsbruck, Technikerstra\ss{}e 13, 6060 Innsbruck, Austria. }
	\email{kolesko@math.uni.wroc.pl}

\maketitle

	{\bf Abstract.}
We consider a supercritical branching process $Z_n$ in a stationary and ergodic random environment $\xi =(\xi_n)_{n\ge0}$. 
Due to the martingale convergence theorem, it is known that the normalized population size $W_n=Z_n\slash (\E (Z_n|\xi ))$ converges almost surely to a random variable $W$. We prove that if $W$ is not concentrated at $0$ or $1$ then for almost every environment $\xi$ the law of $W$ conditioned on the environment $\xi $ is absolutely continuous with a possible atom at $0$. The result generalizes considerably the main result  of \cite{kaplan:1974}, and of course it covers the well-known case of the martingale limit of a Galton-Watson process. Our proof combines analytical arguments with the recursive description of $W$. 

\medskip
\noindent \textbf{Keywords: }{Branching processes ; branching processes in random environment ; martingale limit} 

\medskip \noindent \textbf{AMS subject classification: }{60J80 ; 60K37} 

\section{Introduction and statement of the main result}

There has been a lot of interest in asymptotic properties of $W$ e.g convergence rates of $W-W_n$ as well as limit theorems for $Z_n$ and large deviations principles. Positive and negative, annealed and quenched,  moments of $W$ were studied. Most of that was done for the i.i.d environment, because then properties of the so-called ``associated random walks'' could be applied, but some results hold also in a stationary and ergodic environment. For a sample of results see \cite{bansaye:berestycki:2009, bansaye:boinghoff:2014,bansaye:boinghoff:2017, grama:liu:miqueu:2017,huang:liu:2014,li:liu:gao:wang:2014} and references therein.

However, except of \cite{kaplan:1974} the local regularity of the law of $W$ has not been studied. Due to the basic equation \eqref{Wequation} satisfied by $W$ it is closely related to the local regularity for stationary solutions to affine type equations \eqref{smooth} which is partly our motivation as it is explained in the end of the introduction. We stress that our arguments are valid for stationary and ergodic environment.   	

All our random variables are defined on a probability space $(\Omega,\F,\P)$.
 	
Let $\Delta$ be the space of probability measures on $\N _0=\{0,1,2,...\}$ - the set of possible offspring distributions. Let $\xi =(\xi_n)_{n\ge 0}$ be a stationary and ergodic process taking values in 
$\Delta $.  The sequence $(\xi_n)_{n\ge0}$ is called a "random environment" or "environment sequence".

The process $(Z_n: n\geq 0)$ with values in
$\N _0$ is called a branching process in random environment $\xi$ if $Z_0$ is independent of $\xi$ and it satisfies
\begin{equation}\label{defbran}
\Ll (Z_n|\xi , Z_0,\dots Z_{n-1})=\xi _{n-1}^{*Z_{n-1}}\quad \mbox{a.s.}
\end{equation} 
where $\xi _{n-1} ^{*k}$ is the $k$ fold convolution.
For an environment sequence $\xi$ we denote
$$f_{\xi _n}(s)=\sum _{k= 0}^{\8}s^k\xi _n (\{ k\}), \quad s\in \com,\ |s|\leq 1,$$
the sequence of probability generating functions associated with  $\xi$ and 
$$m_n=m(\xi _n)=f'_{\xi _n}(1)=\sum _{k= 0}^{\8}k\xi _n (\{ k\}),$$ 
the sequence of the means. 

 By $\P_{\xi}$ we denote the measure $\P$ conditioned on the environment $\xi$. The corresponding mean and variance are denoted by $\E_{\xi}$ and $\Var_{\xi} $ i.e. for any random variable $X$ we have  $\E_{\xi}[X]=\E[X|\xi]$ and $\Var_{\xi}(X)=\E\big[(X-\E_{\xi}X)^2|\xi\big]$. For any random variable $X$ also introduce the conditional law $\mathcal L_{\xi}(X)$ by $\mathcal L_{\xi}(X)(A)=\P(X\in A|\xi)$, for any measurable set $A$.

In this notation we may write
\begin{equation}\label{Zgenerating}
F_n(s, \xi) = \E_\xi[s^{Z_n}|Z_0,\dots , Z_{n-1}]=f_{\xi _{n-1}}(s)^{Z_{n-1}}\quad \mbox{a.s.}\end{equation}
Conditioned on the past and on the environment sequence, $Z_n$ may be viewed as the sum of $Z_{n-1}$ independent and identically distributed random variables $Y_{n-1,i}$, each having $f_{\xi _{n-1}}(s)$ as its probability generating function. Then the process $\{ Z_n\} _{n=0}^{\8 }$ conditioned on the environment $\xi $ is called a branching process in varying environment.
Iterating \eqref{Zgenerating} we obtain 
\begin{equation}
\E_\xi [s^{Z_n}|Z_0=m]=\left (f_{\xi _0}(f_{\xi _1}(...(f_{\xi _{n-1}}(s))))\right )^m= F_{n}(s, \xi)^m\end{equation}     
Let
$$
q (\xi )=\P_\xi\Big(\lim _{n\to \8} Z_n=0\Big|Z_0=1\Big)
$$
be the extinction probability of the process $Z_n$. Since $\xi $ is ergodic,
$\P (q(\xi )<1)$ equals $0$ or $1$ a.s.. We assume that the random variable $\log m_0$
is integrable. If $\E \log m_0\leq 0$ then it is easy to see that $\P (q(\xi )=1)=1$, see also \cite{tanny:1977}, unless $\xi _0=\delta _1$ a.s.
On the other hand, if
\begin{equation}\label{meanpos}
0< \mu: =\E \log m_0 <\8 
\end{equation}
then a sufficient condition for $\P (q(\xi )<1)=1$ is 
\begin{equation}\label{nonextin}
 \E |\log (1-f_{\xi _0}(0))|<\8 , 
\end{equation}
see \cite{athreya:karlin:1971}. Moreover, it was proved in \cite{smith:wilkinson:1969} that for  i.i.d. sequences  $(\xi _n)_{n \geq 0}$  the condition \eqref{nonextin} is also necessary for  $\P (q(\xi )<1)=1$ to hold.
See also \cite{athreya:karlin:1971} and \cite{kersting:vatutin:2017}. 
Let $$
M_n=\E_{\xi}[Z_n]=m_0\cdot \ldots \cdot m_{n-1}$${ and }  $$W_n=\frac{Z_n}{M_n}. $$
Then $(W_n)_{ n \ge 0}$ is a nonnegative martingale under $\P_{\xi}$. Therefore,
$$
\lim _{n\to \8 } W_{n }=W$$
exists $\P_{\xi}$--almost surely.
Our main result is the following description of the law of $W$ under $\P_{\xi}$.
\begin{theorem}\label{main}
	Suppose that the environment sequence   $\xi $ is stationary and ergodic and \eqref{meanpos} holds. Let $\mathcal{L} _{\xi}$ be the law of $W$ under $\P _{\xi}$. 
	Then exactly one of the following three cases occurs:
	\begin{enumerate}[(i)]
		\item $\mathcal{L} _{\xi}=\delta _0$ a.s.
	\item $\mathcal{L} _{\xi}=\delta _1$ a.s.
			\item $q(\xi) < 1$ and $\mathcal L_{\xi}(W)=q(\xi)\d _0+\nu _{\xi }$, a.s where $\nu _{\xi}$ is absolutely continuous with respect to the Lebesgue measure. 
	\end{enumerate} 
\end{theorem}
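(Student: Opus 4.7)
The plan is to analyse the conditional characteristic function $\phi_\xi(t) = \E_\xi[e^{itW}]$ via the recursive structure of $W$, combined with an ergodic-theoretic estimate on the effective contraction rate of the iteration.

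First I would reduce to the interesting case. Supposing neither (i) nor (ii) holds, one argues from ergodicity of $\xi$ and the hypothesis $\mu > 0$ that $q(\xi) < 1$ a.s., and that $\mathcal L_\xi$ decomposes as $q(\xi)\delta_0 + \nu_\xi$ with $\nu_\xi$ a nonzero subprobability supported in $(0,\infty)$. The task then becomes proving that $\nu_\xi$ is absolutely continuous.

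Next, the branching recursion
$$W = \frac{1}{m_0}\sum_{i=1}^{Z_1} W^{(i)}(\theta\xi),$$
with $W^{(i)}(\theta\xi)$ conditionally i.i.d.\ of law $\mathcal L_{\theta\xi}$ given $\theta\xi$, yields the functional equation
$$\phi_\xi(t) = f_{\xi_0}\bigl(\phi_{\theta\xi}(t/m_0)\bigr).$$
Setting $\psi_\xi(t) := \phi_\xi(t) - q(\xi)$, so that $\psi_\xi$ is the Fourier transform of $\nu_\xi$, and exploiting $q(\xi) = f_{\xi_0}(q(\theta\xi))$, the analyticity of $f_{\xi_0}$ on the closed unit disk gives
$$\psi_\xi(t) = \sum_{k=1}^{\infty} \frac{f^{(k)}_{\xi_0}(q(\theta\xi))}{k!}\, \psi_{\theta\xi}(t/m_0)^k.$$
The leading coefficient $f'_{\xi_0}(q(\theta\xi))$ lies in $[0,1)$ a.s.\ in case (iii) by strict convexity of $f_{\xi_0}$ and the fact that $q(\theta\xi)$ is a fixed point distinct from $1$.

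My aim is then to show $\psi_\xi \in L^2(\R)$ a.s., which by Plancherel produces an $L^2$ density for $\nu_\xi$. Iterating the functional equation $n$ times yields a bound of the shape
$$|\psi_\xi(t)| \le \prod_{k=0}^{n-1} f'_{\xi_k}\bigl(q(\theta^{k+1}\xi)\bigr)\, \bigl|\psi_{\theta^n \xi}(t/M_n)\bigr| + R_n(\xi, t),$$
where $R_n$ collects the higher-order Taylor contributions. Birkhoff's ergodic theorem gives
$$\frac{1}{n}\sum_{k=0}^{n-1} \log f'_{\xi_k}\bigl(q(\theta^{k+1}\xi)\bigr) \,\longrightarrow\, \E \log f'_{\xi_0}\bigl(q(\theta\xi)\bigr) =: -\lambda < 0,$$
while $(\log M_n)/n \to \mu > 0$. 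Choosing $n = n(t)$ to optimise the trade-off should produce a polynomial decay $|\psi_\xi(t)| \le C(\xi)|t|^{-\alpha}$ with $\alpha > 1/2$ a.s., from which $\psi_\xi \in L^2$ follows.

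The hard part will be controlling the remainder $R_n$ through the iteration: when $|\psi_{\theta\xi}|$ is not small (e.g.\ at moderate $t$) the quadratic and higher Taylor terms are of the same order as the linear one, so a naive linearisation does not close the recursion. This forces a preliminary qualitative step showing $|\psi_\xi(t)| \to 0$ as $|t| \to \infty$ a.s., plausibly by combining Riemann--Lebesgue applied to the convolutional structure of $W$ on large generations (where $Z_n$ is typically large conditional on survival) with the recursive equation. A subsidiary technical point is verifying $\lambda > 0$, which rests on the strict convexity of $f_{\xi_0}$ together with the non-triviality of $\mathcal L_\xi$ in case (iii).
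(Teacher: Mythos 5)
Your proposal shares the starting point with the paper (the functional equation $\psi(t,\xi)=F_n(\psi(t/M_n,T^n\xi))$, an ergodic estimate on $M_n$, and the goal of quantitative Fourier decay), but after that it diverges, and the divergence is exactly where the proposal has a genuine gap.

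The core difficulty is the one you flag yourself: after linearising $f_{\xi_0}$ around the fixed point $q(T\xi)$ and writing $\psi_\xi=\phi_\xi-q(\xi)$, the leading factor $f'_{\xi_0}(q(T\xi))$ only contracts when $\psi_{T\xi}(t/m_0)$ is already small, i.e.\ for $|t|$ beyond a threshold that itself depends on $T^n\xi$; at moderate values the quadratic and higher Taylor terms are of the same order, so the recursion does not close. Your suggested remedy, first establishing $|\psi_\xi(t)|\to 0$ a.s., is stated but not proved, and in fact is strictly stronger than what the paper needs or proves: Theorem \ref{charinfinity} only gives $\limsup_{|t|\to\infty}|\psi(t,\xi)|<1$. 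Even if you had $\psi_\xi\to 0$, you would still need a uniform-in-$\xi$ smallness to dominate the nonlinear remainder along the whole orbit, which is not automatic in an ergodic (non-i.i.d.) setting. A second unproven input is the sign of $\lambda=-\E\log f'_{\xi_0}(q(T\xi))$; your argument for $\lambda>0$ appeals to ``strict convexity together with non-triviality,'' but for degenerate offspring laws $\xi_0=\delta_k$ one has $f'_{\xi_0}(r)=kr^{k-1}$ which can exceed $1$ for $r=q(T\xi)$ close to $1$, and supercriticality in the average does not immediately yield the pointwise or averaged contraction you need; this requires an argument.

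The paper circumvents all of this by aiming instead at $\psi'(\cdot,\xi)\in L^1(\R)$ (Theorem \ref{integrability}) and invoking a tempered-distribution fact (Lemma \ref{distr}) that $L^1$ integrability of $\psi'$ forces $\nu=c\delta_0+\nu_{\rm abs}$; and, crucially, it bounds the product $\prod_j f'_j(\cdot)$ appearing in $F'_n$ by a \emph{telescoping} rewriting in terms of $h_j(r)=\frac{1-r}{1-f_j(r)}f'_j(r)$ (Lemma \ref{functionh}). The key point is that $h_j\le 1$ unconditionally, and $h_j(r)<\eta<1$ whenever $r$ is bounded away from $1$ and $\xi_j$ is not concentrated on $\{0,1\}$; that is a multiplicative contraction valid on the whole disc, not just near the fixed point. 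Combined with the weak decay $\limsup|\psi|<1$ on a subsequence of times (produced via a second application of Birkhoff) and an ergodic count of the ``good'' generations, this gives geometric decay of $F'_{n_i}$ over dyadic blocks, hence integrability of $\psi'$, without any linearisation or remainder control. In short: your route is plausible only at the level of a sketch and hinges on two unestablished claims ($\psi_\xi\to 0$ a.s.\ and $\lambda>0$) plus remainder control you explicitly leave open; the paper's $h$-function identity is the device that makes the decay argument close, and you would need to find a substitute for it to complete your version.
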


The following two recursive formulas will be crucial for our proof.
The definition of the process $Z_n$ yields that $W$ satisfies the relation
\begin{equation}\label{Wequation}
W=\frac{1}{m_0}\sum _{j=1}^{Z_{1}}W_{j},
\end{equation}
where under $\P_{\xi}$, the random variables $W_{j}$ are independent of each other  and independent of $Z_1$ with distribution  $\P_{\xi}(W_j\in\cdot)=\P_{T\xi}(W\in\cdot)$.
We write
$$
\psi (t,\xi )=\E_{\xi}  [e^{itW}]$$
for the  conditional characteristic function of $W$.
Then by the recursive relation \eqref{Wequation} we obtain
\begin{align}\label{recpsi}
\psi (t,\xi )&=
f_{\xi _0}(\psi (t\slash m_0^{-1},T\xi))\nonumber \\
&=f_{\xi _0}\circ \dots\circ f_{\xi _{n-1}}(\psi (t\slash M_n,T^n\xi))\nonumber \\
&=F_n(\psi (t\slash M_n,T^n\xi)),
\end{align}
where $F_n$ is the probability generating function of $Z_n$, see \eqref{Zgenerating}.

The question about local regularity of $\mathcal{L} _{\xi}(W)$ fits very well into a number of similar problems being investigated recently. For the Galton Watson process the $W_j$'s have the same law as $W$ and so then \eqref{Wequation} is an example of the so-called smoothing equation. By the latter we mean 
\begin{equation}\label{smooth}
Y=\sum _{j\geq 1}T_jY_j+C,
\end{equation}
where the equality is meant in law, $(C,T_1,T_2,....)$ is a given sequence of real or complex random variables and $Y_1,Y_2,...$ are independent copies of the variable $Y$ and independent of $(C,T_1,T_2,....)$. 
Let $N$ be a random number of $T_j$'s that are not zero. As long as $\E N>1$ the transform 
$$
S(\mu )=\mbox{Law of }(\sum _{j\geq 1}T_jY_j+C),$$
where $\mu $ is the law of $Y_1$, improves local regularity of the measure, and so it is expected that the fixed points of $S$ are absolutely continuous even when the $T_j$'s and $C$ are discrete. This is indeed the case, see \cite{damek:mentemeier:2018}, \cite{Leckey:2018} and \cite{Liu:2001}.

However, in the case of a random environment, the equation \eqref{Wequation} is not exactly of the form in \eqref{smooth} and so a different approach had to be elaborated.

If $N=1$ a.s., \eqref{smooth} becomes
\begin{equation}\label{affine}
Y=TY+C
\end{equation}
and absolute continuity of the solution is much harder to prove if $(T,C)$ does not possess a priori any regularity, as
for instance in the case of Bernoulli convolutions $T$ is concentrated at $\lambda $,
for some $0<\lambda <1$ and $C$ is a Bernoulli random variable, i.e. $C$ takes the values $+1, -1$ each with probability $1\slash 2$. If $0<\lambda <1/2$ then the law $\nu _{\lambda }$ of $Y=\lambda Y+C$ is continuous but singular with respect to Lebesgue measure and if $\lambda =1/2$ then $\nu _{\lambda }$ is the uniform distribution on $[-2,2]$. 
However, when $1\slash 2<\lambda <1$, $\nu _{\lambda }$ is absolutely continuous for almost every such $\lambda $, \cite{solomyak:1995}, or even better: it is absolutely continuous outside of a subset of $\lambda \in (1\slash 2,1) $  of Hausdorff dimension $0$,  \cite{shmerkin:2014}. Moreover, if particular $\lambda$'s are considered, absolute continuity of $\nu _{\lambda}$ depends on delicate algebraic properties of $\lambda$, see \cite{varju} for an overview of the recent developments on Bernoulli convolutions.   

When we go beyond Bernoulli convolutions there is no general theory about regularity of $\nu $.  Further examples of singular $(T,C)$ that give rise to absolutely continuous solutions as well as to singular ones are given in \cite{brieussel:tanaka:2015},  
\cite{peres:solomyak:1996}, \cite{peres:solomyak:1998}, \cite{pratsiovytyi:khvorostina:2013}, \cite{swiatkowski:2017}.

In order to prove Theorem \ref{main}, we will need some additional statements provided in the next section.    

\section{Further results} 
 In general, for a supercritical BPRE, $W$ may vanish almost surely and conditions for that to happen are well known. Notice that due to \eqref{Wequation}, the sets $\{ \xi : \mathcal{L} _{\xi }=\d _0\}$ and $\{ \xi : \mathcal{L} _{T\xi }=\d _0\}$ differ by a set of measure zero so $\P ( \xi : \mathcal{L} _{\xi }=\d _0)\in \{0,1\}$. If  $\P ( \xi : \mathcal{L} _{\xi }=\d _0) =0$, i.e. if $W$ is not identically zero, let $z(\xi)=\P _{\xi }(W=0)<1$. In fact, as explained below, it is known, that $z(\xi)=q(\xi)$
 but we will not need this information for our proof of Theorem \ref{Kap}.
 We say that a measure is {\it degenerate} if it is concentrated at a point.
\begin{theorem}\label{Kap}
Suppose that the environment sequence $\xi $ is stationary and ergodic, \eqref{meanpos} holds,
$\P ( \xi : \mathcal{L} _{\xi }=\d _0)= 0$ and $\P (\xi: \xi _0\ \mbox{not degenerate})>0$. 
Then 
$$ \mathcal L_{\xi}(W)=z(\xi)\d _0+\nu _{\xi }\ \mbox{a.s.,} $$
where  $\nu _{\xi}$ is absolutely continuous with respect to Lebesgue measure.
\end{theorem}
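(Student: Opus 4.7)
My plan is to combine the Lebesgue decomposition of $\mathcal L_\xi(W)$ with the recursion \eqref{Wequation}, the smoothing effect of convolutions, an ergodic contraction, and (where needed) a Fourier step, to show that the singular part of $\nu_\xi:=\mathcal L_\xi(W)-z(\xi)\delta_0$ on $(0,\infty)$ vanishes almost surely.

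First I would write $\mathcal L_\xi(W)=z(\xi)\delta_0+\nu_\xi$, Lebesgue-decompose $\nu_\xi=\nu_\xi^{ac}+\nu_\xi^s$ on $(0,\infty)$, and set $\sigma(\xi):=\nu_\xi^s(\R)$, $a(\xi):=z(\xi)+\sigma(\xi)\in[0,1]$; the goal becomes $\sigma\equiv 0$. The basic convolution estimate $|(\mu^{*k})^s|_{TV}\le|\mu^s|_{TV}^k$ (valid because convolving with an absolutely continuous probability measure produces an absolutely continuous measure), combined with the mixture representation $\mathcal L_\xi(W)=\sum_{k\ge 0}\xi_0(\{k\})D_{1/m_0}(\mathcal L_{T\xi}(W)^{*k})$ coming from \eqref{Wequation} and the identity $z(\xi)=f_{\xi_0}(z(T\xi))$ (from $\P_\xi(W=0)=\E_\xi[z(T\xi)^{Z_1}]$), yields the functional inequality
$$a(\xi)\;\le\; f_{\xi_0}\bigl(a(T\xi)\bigr)\qquad\mbox{a.s.}$$
Iterating gives $a(\xi)\le F_n(a(T^n\xi),\xi)$. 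A mean-value-theorem refinement sharpens this to $\sigma(\xi)\le f'_{\xi_0}(a(T\xi))\,\sigma(T\xi)$, and hence $\sigma(\xi)\le\sigma(T^n\xi)\prod_{k=0}^{n-1}f'_{\xi_k}(a(T^{k+1}\xi))$.

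The hypothesis $\P(z(\xi)<1)=1$ forces $\P(\xi_0=\delta_0)=0$, which makes $\{a(\xi)=1\}$ shift-invariant modulo null sets, hence by ergodicity of probability $0$ or $1$. In the case $a<1$ a.s., Birkhoff's ergodic theorem produces an infinite subsequence along which $T^n\xi\in\{a\le 1-\varepsilon\}$ for some $\varepsilon>0$; along it $F_n(a(T^n\xi),\xi)\le F_n(1-\varepsilon,\xi)\to q(\xi)$ by dominated convergence (since $Z_n\to\infty$ on non-extinction), so $a\le q$ a.s. The product estimate for $\sigma$, combined with $a\le q$ and the contraction $\E\log f'_{\xi_0}(q(T\xi))<0$ at the extinction orbit (the BPRE analogue of $f'(q)<1$ for supercritical Galton--Watson), then forces the product to vanish exponentially and thus $\sigma\equiv 0$.

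The main obstacle is the complementary case $a\equiv 1$ a.s., where $\nu_\xi$ would be purely singular and the TV-based inequality saturates, providing no contraction. Ruling out this scenario is where the non-degeneracy hypothesis $\P(\xi_0\mbox{ not degenerate})>0$ is indispensable, and it cannot be handled by real-variable arguments alone: one invokes the iterated Fourier recursion \eqref{recpsi}, together with the strict bound $|f_{\xi_0}(s)|<1$ for $|s|<1$ and non-degenerate $\xi_0$, to show that $\psi(t,\xi)$ concentrates near $z(\xi)$ for $|t|$ large, contradicting pure singularity of $\nu_\xi$. Stitching this Fourier-analytic exclusion of $a\equiv 1$ together with the TV contraction in the regime $a<1$ is the technical heart of the proof.
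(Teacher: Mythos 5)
Your real-variable skeleton is sound up to one crucial point. The total-variation estimate for the singular part of a convolution power, the resulting inequality $a(\xi)\le f_{\xi_0}(a(T\xi))$, the ergodic dichotomy ($a\equiv1$ a.s.\ or $a<1$ a.s.), and, in the regime $a<1$, the chain $a\le F_n(1-\eps,\xi)\to q(\xi)\le z(\xi)$ are all correct; note that this already gives $\sigma=a-z\le 0$, so the extra ``contraction at the extinction orbit'' you invoke is superfluous.

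The genuine gap is in excluding $a\equiv1$ a.s. The Fourier step you sketch --- that $\psi(t,\xi)$ concentrating near $z(\xi)$ for large $|t|$, i.e.\ $\widehat{\nu_\xi}(t)\to0$, contradicts pure singularity of $\nu_\xi$ --- is a false implication. There exist plenty of purely singular continuous (Rajchman) measures whose Fourier transforms vanish at infinity (e.g.\ many Bernoulli convolutions referenced in the introduction). In particular, the decay $\limsup_{|t|\to\infty}|\psi(t,\xi)|<1$ established in Theorem~\ref{charinfinity} is perfectly compatible with $\nu_\xi$ being purely singular, so it cannot by itself rule out $a\equiv1$. To exclude that case one needs a quantitative Fourier criterion for absolute continuity; the paper supplies exactly this by proving $\int_{\R}|\psi'(t,\xi)|\,dt<\infty$ (Theorem~\ref{integrability}), which via Lemma~\ref{distr} directly yields $\mathcal L_\xi(W)=c\,\delta_0+\nu_{\rm ac}$. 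But once that integrability estimate is in hand the total-variation reduction becomes redundant, since the Fourier step alone proves the theorem. So the proposal does not circumvent the technically hard step of the paper's proof; it repackages the easy regime and leaves the case $a\equiv1$ --- which is the whole point --- unresolved.
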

\begin{remark}
	Theorem \ref{main} follows directly from Theorem \ref{Kap}. Indeed, if $\P ( \xi : \mathcal{L} _{\xi }=\d _0)=1$ then (i) in Theorem \ref{main} holds. If $\mu >0$ (recall \eqref{meanpos}) and $\P (\xi: \xi _0\ \mbox{degenerate})=1$ then $W_n$ is concentrated at $1$ for every $n$, hence the same is true for $W$.
Moreover,
if $W$ is not identically zero then $z(\xi)=q(\xi)$, see \cite{tanny:1978} and \cite{tanny:1988}.
Let us provide a short argument. If \eqref{meanpos} holds,
then there is a sequence of random variables $c_n(\xi )$ such that
\begin{equation*}
\lim _{n\to \8}c_n^{-1}Z_n=U \,  \rm{ a.s.} \end{equation*}
and
\begin{equation*}
\P _{\xi }(U=0)=q(\xi ),\quad \P _{\xi }(U<\infty )=1,
\end{equation*}
see \cite{tanny:1978}, Theorem 1. But
$$
\frac{Z_n}{M_n}=\frac{Z_n}{c_n}\frac{c_n}{M_n}.$$
Since 
\begin{equation}
\label{UandW}
\P _{\xi }(U=0)\leq \P _{\xi }(W=0)<1,
\end{equation}
$$
\lim _{n\to \8}\frac{c_n}{M_n}=L(\xi )\ a.s.\ \mbox{and}\ 0<L(\xi)<\8 . $$
More precisely, note that since $\frac{c_n}{M_n}$ is constant under $\P_\xi$, it suffices to show that
its limit is strictly positive with positive $\P_\xi$-probability, and this is true due to \eqref{UandW}.
Hence
$$
W=L(\xi )U$$  
and $\P _{\xi }(W=0)=\P _{\xi }(U=0)=q(\xi )$.  
\end{remark}
\begin{remark}
The question when $W$ is not identically zero is well-studied.
For a stationary and ergodic environment a sufficient condition was given in \cite{athreya:karlin:1971a}.
\begin{theorem}
(see \cite{athreya:karlin:1971a}) Let $Z_0=1$. Suppose that \eqref{meanpos} is satisfied and
\begin{equation}\label{kestenstigum}
\E [m_0^{-1}Z_1\log ^+ Z_1] <\8.\end{equation}
Then
	\begin{equation}\label{notzero}
	W=\lim _{n\to \8}\frac{Z_n}{M_n}\ 
	 \mbox{is not identically zero.}\end{equation}
	Furthermore,
	\begin{equation}\label{q}
	\P _{\xi}(W=0)=q(\xi ) \quad \mbox{a.s.}
	\end{equation}
 and
\begin{equation}\label{expec}
\E _{\xi }W=1\quad \mbox{a.s.}
\end{equation} \end{theorem}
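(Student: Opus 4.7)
All three claims reduce to establishing $L^1(\P_\xi)$-convergence of the non-negative $\P_\xi$-martingale $(W_n)_{n\geq 0}$ to $W$ for $\P$-a.e.\ $\xi$. From $L^1$-convergence, $\E_\xi W = \lim_n\E_\xi W_n = 1$, which is (\ref{expec}); taking annealed expectations gives $\E W = 1$, so $W\not\equiv 0$, which is (\ref{notzero}); and (\ref{q}) then follows from a cocycle/selection argument detailed below.

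\textbf{Uniform integrability of $(W_n)$ --- the core step.} Since $W_n\to W$ $\P_\xi$-a.s., Vitali's theorem reduces $L^1$-convergence to quenched uniform integrability of $(W_n)$; by de la Vallée-Poussin, UI would follow from $\sup_n\E_\xi[\Phi(W_n)]<\infty$ a.s.\ for some convex superlinear $\Phi$. The natural choice $\Phi(x)=x\log^+ x$ is tailored to (\ref{kestenstigum}). The one-step decomposition $W_n = (m_{n-1}Z_{n-1})^{-1}\bigl(\sum_{j=1}^{Z_{n-1}} Y_{n-1,j}\bigr) W_{n-1}$ on $\{Z_{n-1}\geq 1\}$, with $Y_{n-1,j}$ i.i.d.\ of law $\xi_{n-1}$ conditionally on $\F_{n-1}$ and $\xi$, combined with conditional Jensen, gives a bound of the form $\E_\xi[\Phi(W_n)\mid\F_{n-1}]\leq \Phi(W_{n-1})+W_{n-1}\gamma(\xi_{n-1})$, with $\gamma$ integrable in $\xi_0$ precisely by (\ref{kestenstigum}). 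Naive iteration, however, produces only linearly growing bounds on $\E_\xi[\Phi(W_n)]$, which is inadequate for UI; this refinement step is where I expect the main technical obstacle. Two standard routes both apply: either truncate offspring at level $A$ (the truncated BPRE yields an $L^2(\P_\xi)$-bounded martingale $W_n^{(A)}$, and one controls $\|W_n-W_n^{(A)}\|_{L^1(\P_\xi)}$ uniformly in $n$ using (\ref{kestenstigum}) and sends $A\to\infty$), or adapt the Lyons--Pemantle--Peres size-biased tree: the tilt $d\tilde\P_\xi/d\P_\xi|_{\F_n}=W_n$ exhibits a distinguished spine along which the offspring distribution is size-biased, and (\ref{meanpos}) together with (\ref{kestenstigum}) become exactly the conditions ensuring $\sup_n W_n<\infty$ $\tilde\P_\xi$-a.s., which is equivalent to $\P_\xi$-UI of $(W_n)$. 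In either case, the difficulty is to replace the independence arguments available in the i.i.d.\ case by Birkhoff ergodic averages on the quenched log-moment $\gamma(\xi_k)$, while preserving the supercriticality afforded by (\ref{meanpos}).

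\textbf{Deducing (\ref{q}).} With (\ref{expec}) in hand, $z(\xi):=\P_\xi(W=0)<1$ a.s. Applying $\P_\xi$ to (\ref{Wequation}) and using that $W = 0$ iff every $W_j = 0$ yields the cocycle $z(\xi)=f_{\xi_0}(z(T\xi))$, which is also satisfied by $q$, and $q\leq z$ since $\{Z_n\to 0\}\subseteq\{W=0\}$. For the reverse inequality, the Markov property and self-similarity of the BPRE give $\P_\xi(W=0\mid\F_n)=z(T^n\xi)^{Z_n}$. On the survival event $\{Z_n\to\infty\}$, Birkhoff applied to the stationary process $(z(T^n\xi))_{n\geq 0}$ (taking values in $[0,1)$ a.s.) produces a positive-density subsequence along which $z(T^n\xi)\leq 1-\eps$, so $z(T^n\xi)^{Z_n}\to 0$ along this subsequence; by Lévy's $0$-$1$ law the limit equals $\1_{\{W=0\}}$, forcing $W>0$ a.s.\ on non-extinction and hence $z = q$.
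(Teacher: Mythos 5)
This theorem is not proved in the paper at all; it is imported as a citation to Athreya and Karlin (1971a), so there is no proof of the paper's to match your proposal against. The closest thing is the remark that follows Theorem~\ref{Kap}, which derives \eqref{q} from \eqref{notzero} by invoking Tanny's theorem on normalizing constants $c_n$ (so $Z_n/c_n\to U$ with $\P_\xi(U=0)=q(\xi)$) and then showing $W=L(\xi)U$. Your derivation of \eqref{q} is a genuinely different and more self-contained route: from \eqref{Wequation} you obtain $q\le z$, and for the converse you combine the conditional identity $\P_\xi(W=0\mid\F_n)=z(T^n\xi)^{Z_n}$ with a Birkhoff-extracted positive-density subsequence on which $z(T^n\xi)\le 1-\eps$ and with L\'evy's $0$--$1$ law to force $W>0$ a.s.\ on survival. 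That part is correct, requires only \eqref{notzero} rather than Tanny's normalizing-constants machinery, and is a nice elementary alternative to the paper's remark.

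The heart of the theorem, though, is \eqref{notzero} and \eqref{expec}, i.e.\ quenched $L^1(\P_\xi)$-convergence of $(W_n)$, and there your proposal has a genuine gap which you yourself flag. You correctly note that a naive one-step $\Phi$-iteration with $\Phi(x)=x\log^+ x$ does not by itself close the argument in the stationary ergodic setting, and that the work is to leverage \eqref{meanpos} and the ergodic theorem. But you then only name the two candidate strategies --- truncation at level $A$, or the Lyons--Pemantle--Peres spine --- without executing either. In the truncation route you do not show how \eqref{kestenstigum} controls $\E_\xi|W_n-W_n^{(A)}|$ uniformly in $n$ for a.e.\ $\xi$. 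In the spine route you do not carry out the Borel--Cantelli/ergodic step that converts $\E\log^+\hat Y_0<\infty$ (which is exactly \eqref{kestenstigum}) together with $n^{-1}\log M_n\to\mu>0$ into $\hat\P_\xi$-a.s.\ boundedness of $W_n$. Both routes can be made to work, but as written this is an outline, not a proof, of the hard part of the statement.
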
 

Moreover, it was proved in \cite{tanny:1988}  that if  $(\xi_n) $ is an i.i.d. sequence then condition \eqref{kestenstigum} is in fact equivalent to \eqref{notzero}. Another proof for  i.i.d environements $(\xi _n)$ is contained in \cite{kersting:vatutin:2017}. For i.i.d environments, assuming \eqref{meanpos}, \eqref{kestenstigum} and \eqref{expec} are equivalent.  
In general, when the sequence $(\xi_n)$ is assumed to be only  stationary and ergodic  \eqref{kestenstigum} is not necessary for $W$ to be not identically zero \cite{tanny:1988}. In this case the necessary condition is 
 $$
 \sum _{n=0}^{\8}\frac{1}{m_n}\bigg(\sum _{k\geq M_{n+1}}k\xi _n(k)\bigg )<\8 \ a.s.$$
 The sufficient condition is only a little bit stronger (see Theorem 1, \cite{tanny:1988}). 
Under this sufficient condition, \eqref{q} and \eqref{expec} hold. 
\end{remark}

\begin{remark}
Theorem \ref{Kap} generalizes considerably Theorem 1 in \cite{kaplan:1974} but, what is more important, Kaplan's proof contains essential gaps that concern the integrability of $|\psi '(\cdot , \xi)|$. We don't think that they are easily reparable within his approach and instead we suggest our proof which is contained in Theorem \ref{integrability} below.  However, the idea to show the integrability of $|\psi '(\cdot , \xi)|$ is borrowed from \cite{kaplan:1974}.
\end{remark}
 In order to prove Theorem \ref{Kap} we use the following analytical result. 
\begin{lemma}\label{distr}
Let $\nu $ be a probability measure on $(\R, {\mathcal B})$ with finite first moment and let $\psi $ be its characteristic function. If $|\psi '|$ is integrable then $\nu = c\d _0+\nu _{\rm abs}$ where $\nu _{\rm abs}$ is absolutely continuous with respect to the Lebesgue measure.
\end{lemma}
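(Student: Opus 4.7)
The plan is to interpret $\psi'$ as the Fourier transform of a natural auxiliary finite signed measure, then invoke Fourier inversion, and finally ``divide by $x$'' to recover absolute continuity of $\nu$ away from the origin.

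Since $\nu$ has finite first moment, $|x|$ is $\nu$-integrable, so differentiation under the integral sign is justified and yields
$$\psi'(t)\;=\;i\int_\R x\,e^{itx}\,\nu(dx).$$
Thus, up to the sign convention for the Fourier transform, $\psi'(t)/i$ is the Fourier transform of the signed measure $\eta(dx):=x\,\nu(dx)$, which is finite because $\int|x|\,\nu(dx)<\8$. The hypothesis $|\psi'|\in L^1(\R)$ therefore says $\hat\eta\in L^1(\R)$. By the classical Fourier inversion theorem for finite signed measures with integrable Fourier transform, $\eta$ is absolutely continuous with respect to Lebesgue measure and possesses a continuous (in particular locally bounded) density
$$h(x)\;=\;\frac{1}{2\pi}\int_\R e^{itx}\hat\eta(t)\,dt,$$
so that $x\,\nu(dx)=h(x)\,dx$ as measures on $\R$.

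Now decompose $\nu=c\,\d_0+\tilde\nu$ with $c:=\nu(\{0\})$ and $\tilde\nu$ the restriction of $\nu$ to $\R\setminus\{0\}$. For every $\eps>0$ and every Borel set $A\subset\{|x|>\eps\}$, division by the nowhere vanishing function $x$ in the identity $x\,\nu(dx)=h(x)\,dx$ gives
$$\tilde\nu(A)\;=\;\int_A\frac{h(x)}{x}\,dx.$$
Letting $\eps\downarrow 0$ by monotone convergence identifies $\tilde\nu$ as an absolutely continuous measure with Lebesgue density $h(x)/x$ on $\R\setminus\{0\}$. This is the decomposition $\nu=c\,\d_0+\nu_{\rm abs}$ asserted by the lemma.

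There is very little obstacle here: the only point to monitor is the possible blow-up of $h(x)/x$ near $0$, but this causes no trouble, since absolute continuity is only claimed on $\R\setminus\{0\}$ (any mass at $0$ being absorbed into $c\,\d_0$) and $\tilde\nu$ is by construction a finite positive measure, so the identity on the annuli $\{\eps<|x|\}$ combined with monotone convergence is all that is needed. The one classical ingredient worth citing explicitly is the Fourier inversion theorem for a finite signed measure whose Fourier transform lies in $L^1(\R)$.
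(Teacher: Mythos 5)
Your proof is correct and reaches the paper's conclusion by the same core mechanism: you identify $\psi'(t)$ as (a constant times) the Fourier transform of the finite signed measure $\eta(dx)=x\,\nu(dx)$, use the $L^1$ hypothesis on $\psi'$ to obtain a continuous density $h$ for $\eta$, and divide by $x$ away from the origin. The only difference is packaging: the paper runs this through the calculus of tempered distributions (writing $\mathcal F^{-1}(\partial_t\psi\,dt)=-ix\,\nu$ and invoking Riemann--Lebesgue for the other side), whereas you stay within the classical Fourier inversion theorem for finite signed measures, using the finite first moment up front to justify differentiation under the integral. Your version is arguably more elementary and also makes explicit a point the paper leaves implicit, namely that the passage from $x\,\nu(dx)=h(x)\,dx$ to a density for $\nu$ on $\R\setminus\{0\}$ is a monotone convergence argument using nonnegativity of $h(x)/x$ and finiteness of $\nu$. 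Both proofs are sound; there is no gap in yours.
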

\begin{proof}
	$\partial _t\psi (t) \ dt$ defines a tempered distribution, see \cite{rudin}, part 2. Moreover, its Fourier inverse satisfies
	\begin{equation*}
	\F ^{-1}(\partial _t\psi (t)\ dt)=\F ^{-1}(\partial _t\psi (t))\ dx=: f(x)\ dx,
	\end{equation*}
	where $f$ is a complex valued function vanishing at infinity. In the above formula the first $\F ^{-1}$ means the inverse Fourier transform of a tempered distribution and the second $\F ^{-1}$ the inverse Fourier transform of an integrable function. On the other hand
	\begin{equation*}
	\F ^{-1}(\partial _t\psi (t)\ dt)=-ix\F ^{-1}(\psi (t)\ dt)=-ix\nu ,
	\end{equation*}
	as tempered distributions. Hence
	$$
	-ix\nu = f(x)\ dx .$$
	This shows that $\nu \1 _{\R \setminus \{ 0\}}$ has density given by $-ix^{-1}f(x)$ and the conclusion follows.
\end{proof}
The key step in the proof of Theorem  \ref{Kap} is the following theorem.
\begin{theorem}\label{integrability}
Suppose that $\xi $ is stationary and ergodic, \eqref{meanpos} holds 
and for a.e. $\xi $, 
\begin{equation}\label{defrho}
\rho (\xi )=\sup _{|x|\geq 1}|\psi (x,\xi )|<1.
\end{equation}
Then  
for a.e $\xi $, $\int _{\R }|\psi '(t,\xi )|\ dt <\8 $. 
\end{theorem}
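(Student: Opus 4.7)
I differentiate the iterated recursion \eqref{recpsi} to get $\psi'(t,\xi)=M_n^{-1}F_n'\bigl(\psi(t/M_n,T^n\xi),\xi\bigr)\psi'(t/M_n,T^n\xi)$. Since $F_n(\cdot,\xi)$ has nonnegative Taylor coefficients, $|F_n'(z,\xi)|\le F_n'(|z|,\xi)$ on the unit disk, and trivially $|\psi'|\le \E_\eta W\le 1$; together with the hypothesis $|\psi(s,T^n\xi)|\le\rho(T^n\xi)$ for $|s|\ge 1$, this yields $|\psi'(t,\xi)|\le F_n'(\rho(T^n\xi),\xi)/M_n$ on $\{|t|\ge M_n\}$. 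Decomposing $\R=\{|t|\le 1\}\sqcup\bigsqcup_{n\ge 0}\{M_n\le|t|<M_{n+1}\}$ (with $M_0:=1$), the bounded part contributes at most $2$ and each annulus at most $2m_n F_n'(\rho(T^n\xi),\xi)$; it therefore suffices to prove $\sum_{n\ge 0}m_n F_n'(\rho(T^n\xi),\xi)<\infty$ a.s.

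The factorization $F_n'(z,\xi)=\prod_{j=0}^{n-1}f'_{\xi_j}(F_{n-1-j}(z,T^{j+1}\xi))$ combined with convexity of $f_{\xi_j}$ and the fixed-point relation $f_{\xi_j}(q(T^{j+1}\xi))=q(T^j\xi)$ gives the chord bound $f'_{\xi_j}(q(T^{j+1}\xi))\le (1-q(T^j\xi))/(1-q(T^{j+1}\xi))$, strictly whenever $\xi_j$ charges $\{k\ge 2\}$. Since $\mu>0$ forces this on a positive-probability set (distributions on $\{0,1\}$ have mean at most $1$), stationarity yields $-\lambda:=\E\log f'_{\xi_0}(q(T\xi))<0$. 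Coupling with $F_k(z,\cdot)\to q(\cdot)$ and a Birkhoff-type truncation of the ``boundary'' indices $j$ near $n$, this gives $F_n'(z,\xi)\le C_\xi e^{-\lambda n/2}$ for every $z$ in a fixed compact subset of $[0,1)$ and some $\P$-a.s.\ finite $C_\xi$.

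Because $\rho(T^n\xi)$ may approach $1$ along subsequences, the previous estimate cannot be applied directly at $z=\rho(T^n\xi)$. Instead, on $\{M_n\le|t|<M_{n+1}\}$ the level-$k$ recursion yields $|\psi'(t,\xi)|\le F_k'(\rho(T^k\xi),\xi)/M_k$ for any $k\le n$. Fixing $\delta>0$ with $\P(\rho(\xi)\le 1-\delta)>0$ and setting $k_n^{*}:=\max\{k\le n:\rho(T^k\xi)\le 1-\delta\}$, ergodicity forces $n-k_n^{*}=o(n)$ a.s., while Birkhoff applied to $\log m_j$ gives $\log(M_{n+1}/M_{k_n^{*}})=(n-k_n^{*})\mu+o(n)$. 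Combining with $F_{k_n^{*}}'(1-\delta,\xi)\le C_\xi e^{-\lambda k_n^{*}/2}$, each annular integral is bounded by $2C_\xi e^{-\lambda n/2+o(n)}$, which is geometrically summable.

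The main obstacle is the triangular-array Birkhoff argument underlying the exponential decay of $F_n'(z,\xi)$: the factors $f'_{\xi_j}(F_{n-1-j}(z,T^{j+1}\xi))$ depend on both $j$ and $n$ through the inner iterate, so Birkhoff's theorem does not apply verbatim. One splits the product at an intermediate scale $K(n)\to\infty$, controls the bulk (where $F_{n-1-j}(z,\cdot)$ is close to $q(T^{j+1}\xi)$) via Birkhoff applied to the stationary sequence $\log f'_{\xi_j}(q(T^{j+1}\xi))$, and dominates the boundary contribution $\prod_{j>n-K(n)}f'_{\xi_j}(\cdots)\le M_n/M_{n-K(n)}$ by stationarity of $\log m_j$.
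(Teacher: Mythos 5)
Your overall blueprint — annular decomposition by $M_n$, the differentiated recursion $\psi'(t,\xi)=M_n^{-1}F_n'(\psi(t/M_n,T^n\xi))\psi'(t/M_n,T^n\xi)$, and showing that $F_n'$ decays geometrically because a positive density of the factors $f'_{\xi_j}(\cdot)$ is contractive — is the same idea the paper pursues, and the last step (the ``most recent good index'' $k_n^{*}$ with $n-k_n^{*}=o(n)$, so $M_{n+1}/M_{k_n^{*}}=e^{o(n)}$) is a clean way to deal with $\rho(T^n\xi)$ drifting to~$1$, not unlike the paper's subsequence $n_i$ with $\rho(T^{n_i}\xi)<d(\beta)$. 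The divergence, and the genuine gap, is in how you get the per-factor contraction. You rely on the chord bound $f'_{\xi_j}\bigl(q(T^{j+1}\xi)\bigr)\le\frac{1-q(T^j\xi)}{1-q(T^{j+1}\xi)}$, which is an estimate \emph{at the fixed point} $q$. The factor you actually need to control is $f'_{\xi_j}\bigl(F_{n-1-j}(z,T^{j+1}\xi)\bigr)$ with $z=\psi(y,T^n\xi)$, and $|z|$ can exceed $q(T^n\xi)$, in which case $F_{n-1-j}(|z|,T^{j+1}\xi)$ approaches $q(T^{j+1}\xi)$ \emph{from above} and $f'_{\xi_j}$ there is \emph{larger} than at $q$. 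To convert the fixed-point contraction into a contraction along the orbit you need a uniform--in--$j$ rate of convergence of $F_k(\cdot,T^{j+1}\xi)\to q(T^{j+1}\xi)$, but that rate depends on the environment shifted by $j$ and is not uniformly bounded in a stationary ergodic setting. You flag this as ``the main obstacle'', but the proposed fix — split at $K(n)\to\infty$, Birkhoff on the bulk, dominate the boundary by $M_n/M_{n-K(n)}$ — does not address it: the bulk estimate still requires ``$F_{n-1-j}$ close to $q$'' \emph{uniformly over the bulk indices}, and simply having $n-1-j\ge K(n)$ does not give that. As sketched, the step would fail.

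The paper's proof is specifically engineered to avoid this. Instead of the chord bound at $q$, it uses the function $h(r)=\frac{(1-r)f'(r)}{1-f(r)}$ and proves (Lemma~\ref{functionh}) the pointwise bound $h(r)\le \bigl(1+f'(1)^{-1}\sum_{k\ge2}\xi_0(k)(1-r)^{k-1}\bigr)^{-1}\le 1$ for \emph{all} $r\in[0,1)$, so no closeness to $q$ is needed; the factor $b(T^j\xi)<1$ applies as soon as $r_j\le\rho(T^{j+1}\xi)$ is bounded away from~$1$, which is an ergodic, not a dynamical, condition. The price is the telescoping device
$\prod_j f'_j(r_{j+1})=\prod_j h_j(r_{j+1})\cdot\frac{1-r_0}{1-r_n}$ (with $r_j=|f_j\circ\cdots\circ f_{n-1}(z)|$),
whose boundary factor $\frac{1}{1-|z|}$ is then controlled by choosing $n_i$ with $\rho(T^{n_i}\xi)<d(\beta)$. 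That telescoping + $h$-bound combination is the missing ingredient in your argument: it is exactly what replaces the triangular-array Birkhoff step you could not close. Incidentally, your assertion that $\E\log f'_{\xi_0}(q(T\xi))<0$ also implicitly uses a cocycle cancellation $\E\log\frac{1-q(\xi)}{1-q(T\xi)}=0$, which requires integrability of $\log(1-q(\xi))$; this is not among the stated hypotheses, and the paper's route via $\rho$ never needs it.
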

It turns out that \eqref{defrho} can be quite easily guaranteed.   
\begin{theorem}\label{charinfinity}
Assume that the environment sequence $\xi $ is stationary and ergodic such that \eqref{meanpos} holds. If 
$W$ is not identically zero and 
$\P (\xi _0\ \mbox{not degenerate})>0$, then
$$\limsup_{|t|\to \8 }|\psi (t,\xi )|<1.$$
\end{theorem}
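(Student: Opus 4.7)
The proof I propose proceeds by contradiction. Set $\rho(\xi) := \limsup_{|t|\to\infty}|\psi(t,\xi)|$; the goal is to rule out $\rho(\xi) = 1$.

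The first step is an invariance argument. The hypothesis $\mu > 0$ forces $m_0 > 0$ a.s., so $\xi_0(\{0\}) < 1$ and $f_{\xi_0}$ is continuous and strictly increasing on $[0,1]$ with $f_{\xi_0}(r) < 1$ for every $r \in [0,1)$. Combined with the elementary bound $|f_{\xi_0}(z)| \leq f_{\xi_0}(|z|)$ and the recursion $\psi(t,\xi) = f_{\xi_0}(\psi(t/m_0, T\xi))$, any sequence $t_n \to \infty$ with $|\psi(t_n,\xi)| \to 1$ forces $|\psi(t_n/m_0, T\xi)| \to 1$. Hence $\rho(\xi) = 1 \Rightarrow \rho(T\xi) = 1$; stationarity then gives equality $\{\rho = 1\} = T^{-1}\{\rho = 1\}$ modulo null sets, and ergodicity yields $\P(\rho = 1) \in \{0,1\}$.

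Now suppose for contradiction $\rho \equiv 1$ a.s. By a diagonal extraction, for a.e.\ $\xi$ I select $t_n(\xi) \to \infty$ with $\psi(t_n/M_k, T^k\xi) \to w_k(\xi) \in \partial \mathbb{D}$ for every $k \geq 0$, and the recursion gives $w_{k-1} = f_{\xi_{k-1}}(w_k)$. The constraint $|f_{\xi_{k-1}}(w_k)| = 1$ with $|w_k| = 1$ forces, whenever $\xi_{k-1}$ is non-degenerate, $w_k$ into the finite unimodular set $U(f_{\xi_{k-1}}) = \{z \in \partial \mathbb{D} : z^{d_{k-1}} = 1\}$, where $d_{k-1} = \gcd\{i - j : i, j \in \supp\xi_{k-1}\}$; moreover $f_{\xi_{k-1}}$ restricts to a monomial on this set. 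By Birkhoff's ergodic theorem applied to $\mathbf{1}_{\{\xi_0 \text{ non-degenerate}\}}$, for a.e.\ $\xi$ the indices $k$ with $\xi_{k-1}$ non-degenerate have positive density, so the root-of-unity constraint $w_k^{d_{k-1}} = 1$ fires for infinitely many $k$.

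The principal obstacle, and the main work of the proof, is converting these periodicity constraints on the limit points $w_k$ into an outright contradiction with $W \not\equiv 0$. For each relevant $k$, the relation $w_k^{d_{k-1}} = 1$ reads $\psi(t_n/M_k, T^k\xi)^{d_{k-1}} \to 1$, which says that the characteristic function of the $d_{k-1}$-fold i.i.d.\ sum of $W$ under $\P_{T^k\xi}$ tends to $1$ along the sparse subsequence $t_n/M_k \to \infty$. I would then expand $W = m_k^{-1}\sum_{i=1}^{Z'_1} W_i$ under $\P_{T^k\xi}$, invoke non-degeneracy of some later $\xi_{k'}$ (available by Birkhoff), and difference the conditional distributions on $\{Z'_1 = a\}$ vs.\ $\{Z'_1 = b\}$ for two values $a < b$ in $\supp \xi_{k'}$ to peel off an additional independent block of $b-a$ i.i.d.\ copies of $W$ under $\P_{T^{k+1}\xi}$ whose characteristic function must also tend to $1$ along this subsequence. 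Iterating this descent through deeper non-degenerate layers should eventually force a single copy of $W$ to be a point mass, contradicting the combination of $W \not\equiv 0$ with non-degeneracy of the environment via the recursive equation. Making this descent rigorous---so that it genuinely forces degeneracy rather than leaving open highly singular ``self-similar'' (Bernoulli-convolution type) behaviour of $W$ compatible with the constraints---is the delicate technical core that I expect to be the hardest part of the proof.
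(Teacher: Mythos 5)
Your approach is genuinely different from the paper's, but it has a real gap that you yourself flag, and I do not think the gap can be closed along the route you sketch. The framework up to the extraction of $w_k\in\partial\mathbb D$ with $w_{k-1}=f_{\xi_{k-1}}(w_k)$ and the root-of-unity constraint $w_k^{d_{k-1}}=1$ on the positive-density set of non-degenerate levels is correct. The problem is the final step: converting infinitely many constraints of the form ``$\psi(s_n,T^k\xi)^{d_{k-1}}\to 1$ along a single sparse sequence $s_n\to\infty$'' into degeneracy of $\mathcal L_\xi(W)$. This is not a consequence of any harmonic-analysis softness: there exist non-atomic probability measures $\nu$ with $\limsup_{|t|\to\infty}|\hat\nu(t)|=1$ (for instance the law of $\sum_{n\ge1}\eps_n/n!$ with i.i.d.\ Bernoulli $\eps_n$, evaluated along $t=2\pi k!$), so the constraints $w_k^{d_{k-1}}=1$ do not by themselves force any atom, and your ``differencing and peeling'' descent has no reason to terminate. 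Whatever rules this out must exploit the branching structure quantitatively, which your argument does not do.

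The paper's proof avoids this entirely and in particular never looks at limits of $\psi$ along a fixed sequence. Its two ingredients are: (a) a quantitative local bound $|\psi(t,\xi)|\le 1-c(\xi)t^2$ for $0\le t\le 1/(2N(\xi))$ (Lemma~\ref{lem:boudedness of characteristic function}), which comes from $\Var_\xi W>0$ (which in turn comes from Lemma~\ref{nondegiffnondeg}, using exactly the non-degeneracy hypothesis you use structurally); and (b) the full branching recursion $\psi(t,\xi)=\E_\xi\big[\psi(t/M_n,T^n\xi)^{Z_n}\big]$, split on $\{W=0\}$ (where it is bounded by $\P_\xi(W=0)<1$) and on $\{W>0\}$ (where $Z_n\approx M_n W\to\infty$ so that $(1-c(t/M_n)^2)^{Z_n}\to 0$ by dominated convergence, after an ergodic-theoretic choice of $n=n_i$ making $c$ and $N$ uniform along a positive-density subsequence). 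The exponent $Z_n$ growing like $M_n$ is precisely the mechanism that kills characteristic-function modulus near $1$; you only raise $\psi$ to fixed powers $d_{k-1}$, which is why your argument cannot distinguish $W$ from those bad continuous measures. If you want to salvage your route, you would at minimum have to replace the fixed-power differencing by something that lets the exponent grow with the level, at which point it collapses into the paper's estimate.
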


\begin{proof}[Proof of Theorem \ref{Kap}]
Suppose that $W$ is not degenerate and \eqref{meanpos} is satisfied. Then it follows from Theorems \ref{charinfinity} and \ref{integrability} that for almost every $\xi $, $\int _{\R }|\psi '(t,\xi )|\ dt <\8 $. Hence by Lemma \ref{distr},
(iii) in Theorem \ref{Kap} holds. Moreover, $z(\xi )<1$ a.s.

If $W$ is degenerate then it follows from Lemma \ref{nondegiffnondeg} below
that $\P (\xi _0 \ \mbox{is degenerate} )=1$.
\end{proof}
\section{Proof of Theorem \ref{charinfinity}}
We first need some auxiliary results.
\begin{lemma}\label{nondegiffnondeg}
Suppose that $W$ is not identically zero and $W$ is degenerate, i.e. $\Var_{\xi} W=0$. Then 
$\P (\xi _0 \ \mbox{is degenerate}) =1$.
\end{lemma}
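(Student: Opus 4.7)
The plan is to use the recursion \eqref{Wequation} in reverse: if $W$ is a deterministic constant under both $\P_\xi$ and $\P_{T\xi}$, then the right-hand side of \eqref{Wequation} is a deterministic multiple of $Z_1$, so $Z_1$ itself must be deterministic under $\P_\xi$. Since $Z_1$ under $\P_\xi$ with $Z_0=1$ has law $\xi_0$, this forces $\xi_0$ to be a point mass.

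First I would set $c(\xi):=\E_\xi W$. The hypothesis $\Var_\xi W=0$ means $W=c(\xi)$ a.s.\ under $\P_\xi$ for a.e.\ $\xi$, and by stationarity the law of $W$ under $\P_{T\xi}$ is $\delta_{c(T\xi)}$ for a.e.\ $\xi$ as well. The event $\{c(\xi)=0\}$ coincides (since $W\ge 0$) with $\{\mathcal L_\xi=\delta_0\}$; the hypothesis that $W$ is not identically zero, together with the 0-1 dichotomy of $\{\xi:\mathcal L_\xi=\delta_0\}$ recorded at the beginning of Section~2, forces $c(\xi)>0$ a.s., and by stationarity $c(T\xi)>0$ a.s.\ too. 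Under $\P_\xi$ the i.i.d.\ variables $W_j$ in \eqref{Wequation} have law $\mathcal L_{T\xi}(W)=\delta_{c(T\xi)}$, so $W_j\equiv c(T\xi)$, and the recursion becomes
$$c(\xi)=\frac{Z_1\,c(T\xi)}{m_0}\quad\text{a.s.\ under }\P_\xi.$$

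Note that $m_0>0$ a.s., for otherwise $\xi_0=\delta_0$ would yield $W=0$ a.s.\ under $\P_\xi$, contradicting $c(\xi)>0$. Since $c(\xi)$, $c(T\xi)$ and $m_0$ are all $\sigma(\xi)$-measurable, the displayed identity rearranges to
$$Z_1=\frac{m_0\,c(\xi)}{c(T\xi)}\quad\text{a.s.\ under }\P_\xi,$$
so $Z_1$ is a deterministic constant under $\P_\xi$. Since the law of $Z_1$ under $\P_\xi$ with $Z_0=1$ is exactly $\xi_0$, this forces $\xi_0$ to be a point mass a.s. There is no substantive obstacle beyond bookkeeping with the recursion; the only delicate step is the 0-1 argument excluding $c(\xi)=0$, which is immediate from the stated hypothesis.
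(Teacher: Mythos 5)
Your proof is correct, and it reaches the paper's conclusion by a cleaner piece of bookkeeping. The paper differentiates \eqref{Wequation} twice: first taking conditional means to get $\E_\xi W=\E_{T\xi}W$, hence a positive constant $\gamma$, and then writing the variance identity
\begin{equation*}
\Var_\xi W=\frac1{m_0}\Var_{T\xi}W+\frac{\gamma^2}{m_0^2}\Var_\xi Z_1,
\end{equation*}
from which $\Var_\xi W=0$ forces $\Var_\xi Z_1=0$ and thus $\xi_0$ degenerate (the paper phrases the last step as a contradiction via Birkhoff, which is more than is needed). You instead substitute the degenerate laws $\delta_{c(\xi)}$ and $\delta_{c(T\xi)}$ directly into \eqref{Wequation} and solve for $Z_1$, obtaining $Z_1=m_0c(\xi)/c(T\xi)$ as a $\sigma(\xi)$-measurable quantity; this sidesteps the variance identity entirely (and with it the parenthetical care the paper has to take about infinite second moments), while still resting on the same two ingredients: positivity of $\E_\xi W$ a.s.\ (from $W$ not identically zero plus a 0--1 law) and the recursion \eqref{Wequation}. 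Both routes are legitimate; yours is the more economical one for this particular lemma, though it yields strictly less information (the paper also gets that $\E_\xi W$ is a single constant $\gamma$, while you only need $c(\xi)>0$). One small quibble: be explicit that you are taking $Z_0=1$ when identifying the $\P_\xi$-law of $Z_1$ with $\xi_0$, since the paper does allow general $Z_0$ in the definition \eqref{defbran}; this is standard but worth a word.
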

\begin{proof}
Taking conditional expectation of both sides of \eqref{Wequation}, we
see that $\E _{\xi}W= \E _{T\xi}W$ and so by ergodicity, $\E _{\xi}W$ is a strictly positive constant, call it $\gamma$. Moreover, due to \eqref{Wequation},
\begin{equation}\label{var}
\Var_{\xi} W= \frac{1}{m_0}\Var_{T\xi} W+\frac{\gamma^2}{m_0^{2}}\Var_{\xi} Z_{1 },\end{equation}
(which holds also in the case when one of the terms is infinite). Suppose that $\Var_{\xi} W=0$. Then iterating \eqref{var}, we have that $\Var_{T^i\xi} Z_{1 }=0$ for all $i\in\N$, which is not possible. Indeed, if 
 $\P (\xi _0 \ \mbox{is not degenerate}) >0$ then by Birkhoff's ergodic theorem for a.e. $\xi $ there is $i$ such that $(T^i\xi )_0=\xi _i$ is not degenerate.
\end{proof}

\begin{lemma}
\label{lem:boudedness of characteristic function}
Assume that $W$ is not identically zero and that $\P (\xi _0\ \mbox{not degenerate})>0$ 
and \eqref{meanpos} holds. Then there is a measurable function $\xi\mapsto (N(\xi),c(\xi))\in\N\times [0,1]$ such that for a.e. $\xi$, $c(\xi)>0$ and
$$|\psi(t,\xi)|\le1-c(\xi)t^2, \quad\text{for } 0\le t\le \tfrac1{2N(\xi)}$$
\end{lemma}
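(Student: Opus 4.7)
The plan is to bound $|\psi(t,\xi)|^2$ rather than $|\psi(t,\xi)|$ directly. Writing $W'$ for an independent copy of $W$ under $\P_\xi$, the basic identity
$$|\psi(t,\xi)|^2=\E_\xi\bigl[e^{it(W-W')}\bigr]=\E_\xi\bigl[\cos(t(W-W'))\bigr]$$
reduces the claim to a lower bound on $\E_\xi[1-\cos(t(W-W'))]$. Since $1-\cos u\ge u^2/4$ whenever $|u|\le 1$, restricting the expectation to the event $\{|W-W'|\le 2N\}$ and taking $|t|\le\tfrac{1}{2N}$ forces $|t(W-W')|\le 1$ on that event, so that
$$1-|\psi(t,\xi)|^2\;\ge\;\frac{t^2}{4}\,\E_\xi\bigl[(W-W')^2\,\1_{\{|W-W'|\le 2N\}}\bigr].$$

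The next step is to choose $N(\xi)$ so that the truncated conditional second moment on the right is strictly positive. Applying the contrapositive of Lemma \ref{nondegiffnondeg}, the assumptions of the present lemma force $\Var_\xi W>0$ for a.e.\ $\xi$, which gives $\P_\xi(W\neq W')>0$ and hence $\E_\xi[(W-W')^2\,\1_{\{|W-W'|\le 2N\}}]>0$ as soon as $N$ exceeds some $\xi$-dependent threshold. I would therefore set
$$N(\xi)=\inf\bigl\{N\in\N:\;\E_\xi\bigl[(W-W')^2\,\1_{\{|W-W'|\le 2N\}}\bigr]>0\bigr\},$$
$$c(\xi)=\min\Bigl\{\tfrac18\,\E_\xi\bigl[(W-W')^2\,\1_{\{|W-W'|\le 2N(\xi)\}}\bigr],\,1\Bigr\}.$$
Both are measurable functions of $\xi$ (the truncated conditional moment depends measurably on the environment, for instance because $\mathcal{L}_\xi(W)$ does through the recursion \eqref{recpsi}), and both are finite and positive for a.e.\ $\xi$. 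Combining the display above with the elementary inequality $1-|\psi|\ge(1-|\psi|^2)/2$ (valid because $|\psi|\le 1$), together with the fact that $c(\xi)t^2\le 1$ on the relevant range, then yields $|\psi(t,\xi)|\le 1-c(\xi)t^2$ for $0\le t\le \tfrac{1}{2N(\xi)}$, as required.

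The only substantive input is the positivity of $\Var_\xi W$ supplied by Lemma \ref{nondegiffnondeg}; everything else is a routine truncation. The mild technicality to watch is that $\Var_\xi W$ may well be infinite for some, or even all, $\xi$, which is precisely why one should not try to extract a quadratic bound from a Taylor expansion of $\psi$ at $0$ (which would require at least a finite conditional second moment). Truncating $W-W'$ at level $2N(\xi)$ sidesteps this entirely, at the expected cost of making both the window of validity $[0,\tfrac{1}{2N(\xi)}]$ and the constant $c(\xi)$ genuinely dependent on the environment rather than uniform in $\xi$.
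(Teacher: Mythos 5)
Your argument is correct and follows the same route as the paper's: bound $|\psi(t,\xi)|^2 = \E_\xi[\cos(t(W-W'))]$ via the elementary cosine inequality, truncate $W-W'$ to sidestep the possible infiniteness of $\Var_\xi W$, and invoke Lemma~\ref{nondegiffnondeg} for positivity. The only differences from the paper are cosmetic: the paper defines $c(\xi)=\tfrac18\min(1,\Var_\xi W)$ first and then lets $N(\xi)$ be the smallest truncation level for which the truncated second moment reaches $8c(\xi)$, while you reverse the order; the paper passes from $|\psi|^2$ to $|\psi|$ via $\sqrt{1-2ct^2}\le 1-ct^2$ while you use $1-|\psi|\ge(1-|\psi|^2)/2$; both do the job.
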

\begin{proof}
First let us observe that  for $|t|\le 1/2$ we have
\begin{align*}
\frac12(e^{it}+e^{-it})=\cos(t)\le 1-t^2/4.
\end{align*}
In particular, let $W'$ be a random variable such that under $\P_{\xi}$, $W$ and $W'$ are i.i.d, then we get
\begin{align*}
|\psi(t,\xi)|^2&=\E_{\xi}\big[e^{itW}\big]\E_{\xi}\big[e^{-itW'}\big]=\E_{\xi}\big[e^{itW-itW'}\big]=\frac12\E_{\xi}\big[e^{it(W-W')}+e^{-it(W-W')}\big]\\
&\le \E_{\xi}\Big[(1-\frac14(t(W-W'))^2)\1_{[|t(W-W')|\le 1/2]}\Big]+\P_{\xi}\big(|t(W-W')|> 1/2\big)\\
&=1-\frac{t^2}4\E_{\xi}\big[(W-W')^2\1_{[|t(W-W')|\le 1/2]}|\big].
\end{align*}
Next, for any natural number $n$ the mapping $\xi\mapsto \E_{\xi}[(W-W')^2\1_{[|W-W'|\le n]}]$ is measurable as well as $\xi\mapsto \E_{\xi}\big[(W-W')^2\big]=2\Var_{\xi} W\in[0,\infty]$. In particular,
$$c(\xi):=\frac1{8}\min(1,\Var_{\xi} W)$$ and $$N(\xi):=\inf\Big\{n:\E_{\xi}\big[(W-W')^2\1_{[|W-W'|\le n]}\big]\ge 8c(\xi)\Big\}$$are also measurable. Next, due to our assumption $\P (\xi _0\ \mbox{not degenerate})>0$ and Lemma \ref{nondegiffnondeg}, we have $c(\xi)>0$ for a.e. $\xi$. Finally,  for $t\le \frac1{2N(\xi)}$ we have 
$$\E_{\xi}\big[(W-W')^2)\1_{[|t(W-W')|\le 1/2]}\big]\ge \E_{\xi}\big[(W-W')^2)\1_{[|(W-W')|\le N(\xi)]}\big]\ge 8c(\xi ),$$ 
and consequently for such $t$  
\begin{align*}
|\psi(t,\xi)| \le\sqrt{1-2c(\xi)t^2}\le1-c(\xi)t^2.
\end{align*}
\end{proof}

\begin{lemma}\label{characteristic}
Assume that the environment sequence $\xi $ is stationary and ergodic such that \eqref{meanpos} holds. If $W$ is not degenerate then for any $0<\beta <1$ there are constants $ c>0$ and  $t_0\leq 1$ such that for a.e. $\xi $ there is a sequence of natural numbers $n_i$ such that
\begin{equation}\label{seq}
(1-\beta )i\leq n_i\leq i, \quad \mbox{for}\ i\geq i_0
\end{equation}  
and
\begin{equation*}
|\psi (t,T^{n_i}\xi )|\leq 1-ct^2, \quad{for}\ 0\leq t \leq t_0. 
\end{equation*}
\end{lemma}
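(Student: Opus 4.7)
The plan is to combine Lemma \ref{lem:boudedness of characteristic function} with Birkhoff's ergodic theorem for the shift $T$. Since $W$ is not degenerate, $W$ is in particular not identically zero, and by the contrapositive of Lemma \ref{nondegiffnondeg} we have $\P(\xi_0 \text{ not degenerate}) > 0$, which is precisely the hypothesis of Lemma \ref{lem:boudedness of characteristic function}. That lemma supplies measurable functions $c(\cdot)$ and $N(\cdot)$ with $c(\xi) > 0$ almost surely and the pointwise bound $|\psi(t,\xi)| \leq 1 - c(\xi)t^2$ for $0 \leq t \leq \tfrac{1}{2N(\xi)}$. The essential difficulty is that both constants depend on $\xi$; the task is to replace them by deterministic ones along a subsequence of shifts that is dense enough to hit every window $[(1-\beta)i, i]$.

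First I would fix $c > 0$ and $t_0 \in (0,1]$ small enough that the level set
$$A := \Big\{\xi : c(\xi) \geq c \text{ and } \tfrac{1}{2N(\xi)} \geq t_0\Big\}$$
has $\P(A) > 0$. Such a choice exists because $c(\xi) > 0$ and $N(\xi) < \infty$ almost surely, so by monotone convergence $\P(A) \to 1$ as $c, t_0 \downarrow 0$. Note that neither $c$ nor $t_0$ depends on $\beta$.

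Next, I would apply Birkhoff's ergodic theorem to $\1_A$ under the ergodic transformation $T$, which gives, for $\xi$ outside a fixed null set,
$$\frac{1}{n}\sum_{i=0}^{n-1} \1_A(T^i\xi) \longrightarrow \P(A) > 0.$$
Subtracting the partial sum up to $\lceil (1-\beta)i\rceil$ from the partial sum up to $i$ and dividing by $\beta i$, I would conclude that the number of indices $n \in [\lceil(1-\beta)i\rceil, i]$ with $T^n \xi \in A$ is asymptotic to $\beta i \, \P(A)$, hence strictly positive for all $i \geq i_0(\xi)$. Setting $n_i$ to be any such index, I obtain $T^{n_i}\xi \in A$ by construction, whence $c(T^{n_i}\xi) \geq c$ and $1/(2N(T^{n_i}\xi)) \geq t_0$, which yields $|\psi(t, T^{n_i}\xi)| \leq 1 - c t^2$ on $[0, t_0]$, as required.

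The only step with any real content is the density argument in the previous paragraph, and even that is routine: windows of width $\beta i \to \infty$ must eventually intersect any set of strictly positive Birkhoff density. No analytic input beyond Lemma \ref{lem:boudedness of characteristic function} is needed, and the exceptional null set is chosen once and for all (depending only on $A$), independent of $\beta$.
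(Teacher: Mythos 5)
Your proposal is correct and follows essentially the same route as the paper: combine Lemma~\ref{lem:boudedness of characteristic function}, which gives the $\xi$-dependent bound $|\psi(t,\xi)|\le 1-c(\xi)t^2$ for $t\le\tfrac{1}{2N(\xi)}$, with Birkhoff's ergodic theorem for the shift $T$ to find, in each window $[(1-\beta)i,i]$, an index $n_i$ at which the constants can be made uniform. The only substantive difference is in the choice of the level set: the paper takes $S=\{\xi: c(\xi)\ge c,\ N(\xi)\le N\}$ with $\P(S)>1-\beta$, so that the high Birkhoff density forces each window to meet $S$ by a counting argument and the constants $c,t_0=\tfrac1{2N}$ are allowed to depend on $\beta$. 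You instead take any $A$ with $\P(A)>0$ and observe that since the windows have length $\beta i\to\infty$ and $A$ has positive Birkhoff density, they must eventually intersect $A$; this buys you $c,t_0$ independent of $\beta$ at the cost of a slightly more careful comparison of partial sums. Both arguments are routine and valid, and the extra independence is not needed downstream (in Theorem~\ref{charinfinity} the parameter $\beta$ is fixed once and for all). One small inaccuracy in your write-up: the deduction ``$W$ not degenerate $\Rightarrow\P(\xi_0\ \text{not degenerate})>0$'' is not literally the contrapositive of Lemma~\ref{nondegiffnondeg}; the clean way is the observation in the Remark following Theorem~\ref{main} that if $\P(\xi_0\ \text{degenerate})=1$ and $\mu>0$ then $W_n\equiv Z_0$, hence $W$ is degenerate — taking the contrapositive of that. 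This is a cosmetic point and does not affect the validity of your argument.
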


\begin{proof}
Given $\beta\in (0,1)$ there are $c>0, N\in\N$ such that probability of the set
$$
S=\{\xi:  c(\xi)\ge c, \,\, N(\xi)\leq N \}$$
is larger than $1-\beta $. By the ergodic theorem, we have for sufficiently large $n$
$$
\sum _{j=1}^n\1 _S(T^j\xi )\geq (1-\beta )n.$$
{Therefore, for every large enough} $i$ we can find $(1-\beta)i\le n_i\le i$ such that $T^{n_i}\xi \in S$. 
In view of Lemma \ref{lem:boudedness of characteristic function}, for $t\leq t_0:=\tfrac1{2N}$ we have
$$
|\psi (t,T^{n_i}\xi )|\leq 1-ct^2.$$
\end{proof}
\begin{proof}[Proof of Theorem \ref{charinfinity}]
We write, using \eqref{recpsi},
\begin{align*}
\psi (t,\xi )&=\E_{\xi}\bigg[\psi  \Big (\frac{t}{M_n},T^n\xi \Big )^{Z_{n}}\bigg] \\
&=\E_{\xi}\bigg[\psi  \Big (\frac{t}{M_n},T^n\xi \Big )^{Z_{n}}\1 _{[W=0]}\bigg]
+\E_{\xi}\bigg[\psi  \Big (\frac{t}{M_n},T^n\xi \Big )^{Z_{n}}\1 _{[W>0]}\bigg] 
\end{align*}
and the absolute value of the first term above is bounded by $\P _{\xi} (W=0)<1$. 

It remains to estimate the second term. 
By the ergodic theorem we get that for every $0<\eps <\mu 
$ and almost every $\xi $
\begin{equation}\label{means}
e^{i(\mu - \eps)}\leq M_i\leq e^{i(\mu +\eps)}
\end{equation}
for sufficiently large $i$.
Fix $\beta , \eps >0$ such that  $\g = \beta \mu +2\eps <(\mu -\eps)\slash 2 $. 
Then $M_{n_i}\to \8 $ and for sufficiently large $i$ 
we have
\begin{equation}\label{quotientup}
\frac{M_{n _{i+1}}}{M_{n_i}}\leq  e^{\gamma n_i\slash (1-\beta)}=:\a _{n_i}. 
\end{equation}   
Indeed, in view of \eqref{means} and \eqref{seq}
\begin{equation*}
\frac{M_{n_{i+1}}}{M_{n_i}}\leq e^{(\mu +\eps )i} e^{-(\mu -\eps )(1-\beta )i }\leq e^{\gamma i}\leq e^{\gamma n_i\slash (1-\beta)} .
\end{equation*}

For large enough $i_0=i_0(\xi)$ the intervals  $[\a _{n_i}^{-1}t_0M_{n_{i+1}}, t_0M_{n_{i+1}}]$, for $i\ge i_0$, cover $[ t_0 M_{n_{i_0}},\8 )$. 
 Indeed, given $x\geq M_{n_{i_0}}t_0$, let 
 $$
 M_{n_i}=\max \{ M_{n_k}: t_0M_{n_k}\leq x , k\geq i_0\}.$$
 Moreover, we may assume that $i$ is maximal with that property. Then
 $x< t_0M_{n_{i+1}}$ and $x\in [\a _{n_i}^{-1}t_0M_{n_{i+1}}, t_0 M_{n_{i+1}}]$. 

Further, for
 $\a ^{-1} _{n_i}t_0M_{n_{i+1}}\leq |t|\leq t_0M_{n_{i+1}}$ we have
\begin{align*}
\bigg|\E_{\xi}\bigg[\psi  \Big (\frac{t}{M_{n_{i+1}}},T^{n_{i+1}}\xi \Big )^{Z_{n_{i+1}}}\1 _{[W>0]}\bigg]\bigg |
&\leq \E_{\xi}  \bigg[\bigg |\psi  \Big (\frac{t}{M_{n_{i+1}}},T^{n_{i+1}}\xi \Big )\bigg |^{Z_{n_{i+1}}}\1 _{[W>0]}\bigg]
\\
&\leq \E_{\xi} \Big[\big (1-c(|t|M^{-1}_{n_{i+1}})^2\big )^{Z_{n_{i+1}}}\1 _{[W>0]}\Big]
\\ 
&\leq \E_{\xi} \Big[\big (1-ct_0^2 e^{-2\g n_i\slash (1-\beta )}\big )^{Z_{n_{i+1}}}\1 _{[W>0]}\Big]
\end{align*}
For the ease of notation, let $\theta _{n_i}=ct_0^2e^{-2\g n_i\slash (1-\beta )}$ and $W_{n_{i+1}}=\frac{Z_{n_{i+1}}}{M_{n_{i+1}}}$. Then since $1-\theta _{n_i}\leq e^{-\theta _{n_i}}$ we have
\begin{align*}
\Big|\E_{\xi}\Big[\psi  \Big (\frac{t}{M_{n_{i+1}}},T^{n_{i+1}}\xi \Big )^{Z_{n_{i+1}}}\1 _{[W>0]}\Big]\Big |
&\leq 
\E_{\xi} \Big [e^{-\theta _{n_i}Z_{n_{i+1}}}\1 _{[W>0]}\Big]
\\
&=\E_{\xi} \Big[e^{-\theta _{n_i}M_{n_{i+1}}W_{n_{i+1}}}\1 _{[W>0]}\Big].
\end{align*}
Since, for large enough $i$, we have
$$
M_{n_{i+1}}\geq e^{(\mu -\eps )n_{i+1}}\geq e^{(\mu -\eps)(i+1)\slash (1-\beta )}$$
and 
$$
2\g n_i\slash (1-\beta )\leq (\mu -\eps) i\slash (1-\beta )$$
we conclude
$$ 
\lim _{n_i\to \8 }\theta _{n_i}M_{n_{i+1}}=\infty.$$
In particular, by the dominated convergence theorem we infer
$$
\limsup _{i\to \8} \E_{\xi} \Big [e^{-\theta _{n_i}M_{n_{i+1}}W_{n_{i+1}}}\1 _{[W>0]}\Big ]=  \E_{\xi} \Big [ \limsup _{i\to \8} e^{-\theta _{n_i}M_{n_{i+1}}W_{n_{i+1}}}\1 _{[W>0]}\Big ]=0.$$
  
\end{proof}

\section{Integrability of $\psi '$}
In this section we prove Theorem \ref{integrability}. To this end, we need the following auxiliary result.
\begin{lemma}\label{functionh}
Fix $\xi _0$ such that $\xi _0(0)<1$. Let $f=f_{\xi _0}$ and 
$$
h(r)=\frac{1-r}{1-f(r)}f'(r),\quad  0\leq r<1.$$ Then
$$
h(r)\leq \frac{1}{1+f'(1)^{-1}\sum _{k=2}^{\8 }\xi _0(k)(1-r)^{k-1}}\le 1.$$
\end{lemma}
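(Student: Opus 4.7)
The plan is to rewrite $h(r)$ in a form where numerator and denominator share a common structure. Since $f(1)=1$, one has
$$
1-f(r)=\sum_{k\geq 1}\xi_0(k)(1-r^k)=(1-r)\sum_{k\geq 1}\xi_0(k)\sum_{j=0}^{k-1}r^j,
$$
and $f'(r)=\sum_{k\geq 1}k\xi_0(k)r^{k-1}$. Substituting into the definition gives
$$
h(r)=\frac{f'(r)}{\displaystyle\sum_{k\geq 1}\xi_0(k)\sum_{j=0}^{k-1}r^j},
$$
so after subtracting $1$ and taking reciprocals,
$$
\frac{1}{h(r)}-1=\frac{1}{f'(r)}\sum_{k\geq 2}\xi_0(k)\Bigl[\sum_{j=0}^{k-1}r^j-kr^{k-1}\Bigr],
$$
the $k=0,1$ terms contributing zero. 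The condition $\xi_0(0)<1$ ensures $1-f(r)>0$ for $r\in[0,1)$, so all quantities are well-defined.

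Next, I would bound the bracket from below. For $0\le j\le k-1$ and $r\in[0,1]$ we have $r^j\ge r^{k-1}$, so
$$
\sum_{j=0}^{k-1}r^j-kr^{k-1}=\sum_{j=0}^{k-1}(r^j-r^{k-1})\ge 1-r^{k-1}.
$$
Then the elementary superadditivity estimate $a^{k-1}+b^{k-1}\le(a+b)^{k-1}$ (valid for $a,b\ge 0$ and $k\ge 2$), applied with $a=1-r$, $b=r$, gives $(1-r)^{k-1}+r^{k-1}\le 1$, i.e.\ $1-r^{k-1}\ge(1-r)^{k-1}$. Combining these estimates with the monotonicity bound $f'(r)\le f'(1)$ (immediate since $f'$ has nonnegative coefficients), I get
$$
\frac{1}{h(r)}-1\ge\frac{1}{f'(1)}\sum_{k\geq 2}\xi_0(k)(1-r)^{k-1}.
$$
Inverting yields the first inequality of the statement; the second inequality $\le 1$ is immediate since each $\xi_0(k)(1-r)^{k-1}$ is nonnegative.

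The whole argument is essentially algebraic, so there is no real obstacle; the only non-trivial point is identifying the chain $\sum_{j=0}^{k-1}r^j-kr^{k-1}\ge 1-r^{k-1}\ge(1-r)^{k-1}$, which reduces the problem to a term-by-term comparison. This bound is sharp for $k=2$ (both inequalities are equalities), which explains why the author's claimed form is optimal in that regime.
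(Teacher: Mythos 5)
Your proof is correct, and it takes a genuinely different (and somewhat more elementary) route than the paper's. The paper starts from Taylor's theorem for $f$ expanded at $r$, uses the nonnegativity of all derivatives to pass to the series inequality $1-f(r)\ge (1-r)f'(r)+R_1$ with $R_1=\sum_{k\ge 2}\frac{f^{(k)}(r)}{k!}(1-r)^k$, and then bounds $R_1\ge\sum_{k\ge2}\xi_0(k)(1-r)^k$ via the one-line observation $f^{(k)}(r)\ge f^{(k)}(0)=\xi_0(k)\,k!$. You instead use the exact algebraic identity $1-f(r)=(1-r)\sum_{k\ge1}\xi_0(k)\sum_{j=0}^{k-1}r^j$, cast the problem as a lower bound on $\tfrac{1}{h(r)}-1$, and bound each bracket $\sum_{j=0}^{k-1}r^j-kr^{k-1}$ from below by $(1-r)^{k-1}$ through the chain $\sum_j r^j-kr^{k-1}\ge 1-r^{k-1}\ge(1-r)^{k-1}$. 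In the end both arguments bound the same quantity $1-f(r)-(1-r)f'(r)$ from below by $\sum_{k\ge2}\xi_0(k)(1-r)^k$ and both use $f'(r)\le f'(1)$; the paper's version is slightly slicker once Taylor's theorem is granted, while yours avoids Taylor's theorem entirely and replaces the derivative-monotonicity step by a direct per-term inequality. One small point worth stating explicitly: passing to $\tfrac1{h(r)}-1$ tacitly assumes $h(r)>0$, i.e. $f'(r)>0$; in the degenerate case $f'(r)=0$ one has $h(r)=0$ and the claimed bound is trivially true, so nothing is lost.
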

\begin{remark}
	The idea to consider the function $h$ is borrowed from \cite{bansaye:boinghoff:2014}.
\end{remark}
\begin{proof}
First, let us observe that for any  $0<r<1$ we have  
$$
1=f(1)\geq \sum _{k=0}^{\8 }\frac{f^{(k)}(r)}{k!}(1-r)^k.$$
Indeed, by applying Taylor's theorem at  $r$ we get that for any natural number $N$
$$
1=f(1)=\sum _{k=0}^{N }\frac{f^{(k)}(r)}{k!}(1-r)^k +\frac{f^{(N+1)}(s )}{(N+1)!}(1-r)^{N+1}, $$
for some $1-r<s <1$
and since all the derivatives are positive we can take the limit for $N\to \8$ and obtain the desired inequality.
Next, we conclude that
$$
h(r)\leq\frac{(1-r)f'(r )}{f'(r)(1-r)+R_1}\leq 1,$$
where the reminder $R_1$ is given by
$$
R_1=\sum _{k=2}^{\8 }\frac{f^{(k)}(r)}{k!}(1-r)^k.$$
From the fact that $\xi _0(0)<1 $ a.s. we infer
$$
f'(r)=\sum _{m=1}^{\8 }m \xi _0(m)r^{m-1}>0,\quad a.s.$$ 
 and
$$
h(r)\leq \frac{1}{1+f'(r)^{-1}(1-r)^{-1}R_1}.$$
Since all derivatives of $f$ are nonnegative and so nondecreasing, we conclude
\begin{align*}
f'(r)\leq f'(1)
\end{align*}
 and $$\xi _0(k)k!= f^{(k)}(0)\le f^{(k)}(r).$$
In particular, we can estimate the reminder from below
$$
R_1\geq \sum _{k=2}^{\8 }\xi _0(k)(1-r)^k$$
and for  $0\le r\le 1$ we have
\begin{align*}
h(r)&\leq \frac{1}{1+f'(1)^{-1}\sum _{k=2}^{\8 }\xi _0(k)(1-r)^{k-1}}.
\end{align*}
\end{proof}
Now we are ready to prove Theorem \ref{integrability}.
\begin{proof}[Proof of Theorem \ref{integrability} ]

Given $\beta >0$, choose $0<d(\beta )<1$ such that for $S_1=\{ \xi : \rho (\xi )<d (\beta)\}$ we have $\P (S_1)>1-\beta $. By 
the ergodic theorem we conclude that for a.e. $\xi $ and sufficiently large $n$ 
$$
\sum _{j=1}^{n }\1 _{S_1}(T^j\xi )> (1-\beta )n.$$
Therefore, we may choose a sequence $n_i\to \8 $ such that 
\begin{equation}\label{subseq}
 (1-\beta )i <n_i\leq i\end{equation} and 
$$\rho (T^{n_i}\xi )<d (\beta ) $$
for sufficiently large $i$.

Notice that due to $\mu >0$,
\begin{equation}\label{nontriv1}
\P (\xi _0(0)+\xi _0(1)<1)>0.
\end{equation} 
Moreover, our assumptions imply that 
\begin{equation}\label{nontriv}
\P (\xi _0(0)<1)=1.
\end{equation}
Indeed, let $\tilde S=\{ \xi : \xi _0(0)=1\} $ and $\P (\tilde S)>0$. Then, by the PoincarÃ© recurrence theorem, for a.e. $\xi $ there is $n$ such that $T^n\xi \in \tilde S$ i.e. $\xi _0(0)=1$ and so $Z_{n+1}=0$ hence $W=0$ a.s.,
which contradicts \eqref{defrho}.  
Let us introduce
\begin{equation}
b(\xi )=\frac{1}{1+f'_0(1)^{-1}\sum _{k=2}^{\8 }\xi _0(k)(1-\rho (T\xi ))^{k-1}}.
\end{equation}
In view of \eqref{means} and \eqref{nontriv1}, there are 
 $0<\eta <1$ and $\chi >0$ such that for $S=\{ \xi : b (\xi )<\eta \}$ we have $\P (S)>\chi $.
Take $0<\eps <\mu \slash 4$ and $\beta $ such that
\begin{equation}\label{eta}
\chi |\log \eta |>4\gamma (1-\beta )^{-1},
\end{equation}
where $\gamma =\beta \mu +2\eps$. Then $M_{n_i}\to \8 $ and for sufficiently large $i$ 
\eqref{quotientup} is satisfied
and
\begin{equation}\label{quotientdown}
\frac{M_{n_i}}{M_j}\geq 1,\quad \mbox{provided}\ \frac{\chi n_i}{4}\leq j \leq \frac{n_i}{2}.  
\end{equation}   
Indeed, in view of \eqref{means} and \eqref{subseq},
\begin{align*}
M_{n_i}M_j^{-1}\geq &e^{(\mu - \eps )n_i}e^{-(\mu + \eps )j}=e^{\mu (n_i-j)- \eps (n_i+j)}\\
&\geq e^{\mu n_i\slash 2 - 2\eps n_i}\geq 1,
\end{align*}
by the choice of $\eps$. 
As before, in view of \eqref{quotientdown} there is $i(\xi)$ such that for $i\geq i(\xi)$ the intervals
$[M_{n_i}, M_{n_i}\alpha _{n_i}]$ cover $[M_{n_{i(\xi)}},\8 )$. Therefore,
\begin{align*}
\int _{|t|\geq M_{n_{i(\xi)}} }|\psi '(t,\xi )|\ dt 
&\leq \sum _{i\geq i(\xi )}\int _{M_{n_i}\leq |t|\leq \alpha  _{n_i }M_{n_i}}|\psi '(t,\xi )|\ dt\\
&=\sum _{i\geq i(\xi )}\int _{M_{n_i}\leq |t|\leq \a _{n_i}M_{n_i}} |F'_{n_i} (\psi (t\slash M_{n_i} ,T^{n_i}\xi )\psi '(t\slash M_{n_i},T^{n_i}\xi )|M_{n_i}^{-1}\ dt\\
&=\sum _{i\geq i(\xi )}\int _{1\leq |y|\leq \a _{n_i}} |F'_{n_i} (\psi (y ,T^{n_i}\xi ))||\psi '(y,T^{n_i}\xi )|\ dy
\end{align*}
since $\psi (t,\xi )=F_{n_i}(\psi (t\slash M_{n_i},T^{n_i}\xi ))$.
Moreover, 
$$
|\psi '(y, T^{n_i}\xi )|\leq \E _{T\xi}W\leq 1.$$
For any  $n$ and any complex number $z $ in the unit disk we have (to avoid a double subscript we will write  $ f_{k}$ instead of $f_{\xi _{k}}$):
\begin{align*}
|F'_n(z, \xi)|&=\prod _{j=0}^{n-1}|f'_j (f_{j+1}\circ ...\circ f_{n-1}(z)|
\leq \prod _{j=0}^{n-1}f'_j (|f_{j+1}\circ ...\circ f_{n-1}(z)|)\\
&=\prod _{j=0}^{n-1}\frac{1-|f_{j+1}\circ ...\circ f_{n-1}(z)|}{1-|f_{j}\circ ...\circ f_{n-1}(z)|}f'_j (|f_{j+1}\circ ...\circ f_{n-1}(z)|)
\times \frac{1-|f_{0}\circ ...\circ f_{n-1}(z)|}{1-|z|}\\
&\leq\prod _{j=0}^{n-1}\frac{1-|f_{j+1}\circ ...\circ f_{n-1}(z)|}{1-f_j(|f_{j+1}\circ ...\circ f_{n-1}(z)|)}f'_j (|f_{j+1}\circ ...\circ f_{n-1}(z)|)
\times \frac{1-|f_{1}\circ ...\circ f_{n-1}(z)|}{1-|z|},
\end{align*} 
since  $|f_i^{(k)}(z)|\le f_i^{(k)}(|z|)$ for any $k,\ i\ge0$ and any complex $|z|\le1$.
We intend to prove that for sufficiently large $i$
\begin{equation}
\label{eq:decay of the product}
\prod _{j=0}^{{n_i}-1}\frac{1-|f_{j+1}\circ ...\circ f_{{n_i}-1}(z)|}{1-f_j(|f_{j+1}\circ ...\circ f_{{n_i}-1}(z)|)}f'_j (|f_{j+1}\circ ...\circ f_{{n_i}-1}(z)|)\leq \eta ^{{\chi n_i}\slash 4-2}.
\end{equation}
uniformly for $s=\psi (y, T^{n_i}\xi )$ and $|y|\geq 1$.
Hence,
\begin{equation*}
\int _{1\leq |y|\leq \a _{n_i}} |F'_{n_i} (\psi (y ,T^{n_i}\xi ))||\psi '(y,T^{n_i}\xi )|\ dt \leq (1-d (\beta ) )^{-1}
\eta ^{{\chi n_i}\slash 4-2}e^{\gamma n_i\slash (1-\beta )}.
\end{equation*}
Then in view of \eqref{eta}, $\eta ^{{\chi n_i}\slash 4}e^{{n_i}\gamma \slash (1-\beta )}$ decays exponentially and so $\psi '(\cdot ,\xi )$ is integrable.

We return now to show that the inequality \eqref{eq:decay of the product}  holds. To this end, we first prove that for almost every $\xi $, sufficiently large $i$,  $\frac{{\chi n_i}}{4}\leq j \leq \frac{{n_i}}{2}$ and $|y|\geq 1$, we have
$$
  |f_{j+1}\circ ...\circ f_{{n_i}-1}(\psi (y,T^{n_i}\xi ))|\leq \rho (T^{j+1}\xi ).$$
Indeed, 
\begin{equation*}
f_{j}\circ ...\circ f_{{n_i}-1}(\psi (y,T^{n_i}\xi ))=\psi (M_{n_i}M_j^{-1}y,T^j\xi ).
\end{equation*}
So by \eqref{quotientdown}, for $|y|\geq 1$
\begin{equation*}
|\psi (M_{n_i}M_j^{-1}y,T^j\xi )|\leq \sup _{|y|\geq 1}|\psi (y,T^j\xi )|=\rho (T^j\xi )|.
\end{equation*}
For $r\in [0,1]$ consider
$$
h_j(r)=\frac{1-r}{1-f_j(r)}f'_j(r).$$
By Lemma \ref{functionh}, for $0\leq r\leq 1$
$$
h_j(r)\leq \frac{1}{1+f'_j(1)^{-1}\sum _{k=2}^{\8 }\xi _j(k)(1-r)^{k-1}}\leq 1.$$
Let $r=|f_{j+1}\circ ...\circ f_{{n_i}-1}(\psi (y,T^{n_i}\xi ))|$. For ${\chi n_i}\slash 4\leq j\leq {n_i}\slash 2$ and $i$ sufficiently large we have
\begin{align*}
h_j(r)&\leq \frac{1}{1+f'_j(1)^{-1}\sum _{k=2}^{\8 }\xi _j(k)(1-r)^{k-1}}\\
&\leq \frac{1}{1+f'_j(1)^{-1}\sum _{k=2}^{\8 }\xi _j(k)(1-\rho (T^{j+1}\xi ))^{k-1}}\\
&=b(T^{j}\xi ).
\end{align*}
Hence
$$
\prod _{j=0}^{{n_i}-1}\frac{1-|f_{j+1}\circ ...\circ f_{{n_i}-1}(z)|}{1-f_j(|f_{j+1}\circ ...\circ f_{{n_i}-1}(z)|)}f'_j (|f_{j+1}\circ ...\circ f_{{n_i}-1}(z)|)\leq 
\prod _{j=\lceil {\chi n_i}\slash 4 \rceil }^{\lfloor {n_i}\slash 2 \rfloor }b(T^{j}\xi )$$
for a.e. $\xi $ and sufficiently large $i$.
Since $n_i\to \infty $ 
\begin{equation}
\frac{1}{\lfloor {n_i}\slash 2\rfloor} \sum _{j=0}^{\lfloor {n_i}\slash 2\rfloor}\1 _S(T^j\xi )\to \P (S)>\chi , \quad \mbox{a.s.}. 
\end{equation}
So there is $N(\xi )$ such that for $n_i\geq N(\xi )$, $T^j\xi \in S$ at least $\chi \lfloor {n_i}\slash 2\rfloor =\chi n_i\slash 2-1$ times, that is
$$
b(T^j\xi )<\eta $$ at least ${\chi n_i}\slash 4-2$ times for $j>{\chi n_i}\slash 4$. 
Finally,
$$
\prod _{j=1}^{n_i}h_j(r)\leq \eta ^{{\chi n_i}\slash 4 -2}.$$  
\end{proof}

\noindent
{\bf Acknowledgements}\\
The research was partly carried out during visits of E.\;Damek and K.\;Kolesko to Tech\-nische Universit\"at M\"unchen and during visits of
N.\;Gantert to the University of Wroc\l aw.
Grateful acknowledgement is made for hospitality to both universities. The first author was partly supported by NCN Grant DEC-2014/15/B/ST1/00060, the third author was partly supported by NCN Grant DEC-2014/14/E/ST1/00588 and DFG Grant ME 3625/3-1.
We thank Alexander Iksanov for pointing us to \cite{kaplan:1974}.

\providecommand{\bysame}{\leavevmode\hbox to3em{\hrulefill}\thinspace}
\providecommand{\MR}{\relax\ifhmode\unskip\space\fi MR }
\providecommand{\MRhref}[2]{%
  \href{http://www.ams.org/mathscinet-getitem?mr=#1}{#2}
}
\providecommand{\href}[2]{#2}

\end{document}